\def\temp{&} \catcode`&=\active \let&=\temp
\tikzset{help lines/.style={very thin, color=lightgray, dashed}}
\tikzset{axis lines/.style={very thin, color=lightgray}}
\def\mathcolor#1#{\@mathcolor{#1}}
\def\@mathcolor#1#2#3{%
  \protect\leavevmode
  \begingroup
    \color#1{#2}#3%
  \endgroup}
\let\originalleft\left
\let\originalright\right
\renewcommand{\left}{\mathopen{}\mathclose\bgroup\originalleft}
\renewcommand{\right}{\aftergroup\egroup\originalright}
\newtheoremstyle{MYBREAK}% name
  {}          % Space above, empty = `usual value'
  {}          % Space below
  {\itshape}  % Body font
  {}          % Indent amount (empty = no indent, \parindent = para indent)
  {\bfseries} % Thm head font
  {:}         % Punctuation after thm head
  {\newline}  % Space after thm head: \newline = linebreak
  {}          % Thm head spec
\newtheoremstyle{MYPLAIN}
  {\topsep}   % ABOVESPACE
  {\topsep}   % BELOWSPACE
  {\itshape}  % BODYFONT
  {15pt}          % INDENT (empty value is the same as 0pt)
  {\bfseries} % HEADFONT
  {}         % HEADPUNCT
  {5pt plus 1pt minus 1pt} % HEADSPACE
  {}          % CUSTOM-HEAD-SPEC
\theoremstyle{definition}
\newtheorem{DEF}{Definition}[section]
\theoremstyle{plain}
\newtheorem{PROP}[DEF]{Proposition}
\newtheorem{LMM}[DEF]{Lemma}
\newtheorem{THM}[DEF]{Theorem}
\theoremstyle{MYBREAK}
\newcommand{\TAB}{\quad}
\newcommand{\CSTAR}{\ensuremath{C^*}}
\DeclareMathSymbol{\mlq}{\mathord}{operators}{'134}
\DeclareMathSymbol{\mrq}{\mathord}{operators}{'42}
\newenvironment{TIKZCD}{\[\begin{tikzcd}}{\end{tikzcd}\]\ignorespacesafterend}
\renewcommand{\subset}{\subseteq}
\renewcommand{\phi}{\varphi}
\renewcommand{\epsilon}{\varepsilon}
\newcommand{\xRightarrow}[2][]{\ext@arrow 0359\Rightarrowfill@{#1}{#2}}
\newcommand{\INTEGERS}{\mathbb{Z}}
\newcommand{\REALS}{\mathbb{R}}
\newcommand{\COMPLEX}{\mathbb{C}}
\DeclareMathOperator{\trace}{tr}
\newcommand{\REMAINDER}{\mathrm{rem}}
\newcommand{\inputs}{\mathrm{in}}
\newcommand{\outputs}{\mathrm{out}}
\newcommand{\Alice}{\operatorname{Alice}}
\newcommand{\Bob}{\operatorname{Bob}}
\newcommand{\player}{\operatorname{player}}
\newcommand{\twoplayer}{\operatorname{two~player}}
\DeclareMathOperator{\pos}{pos}
\DeclareMathOperator{\proj}{proj}
\newcommand{\UHF}{\mathrm{UHF}}
\newcommand{\TWOONEFACTOR}{$\mathrm{II}_1$-factor}
\newcommand{\hyperfiniteTWOONE}{\mathcal{R}}
\newcommand{\Romega}{\mathcal{R}^\omega}
\newcommand{\tensor}{\mathbin{\otimes}}
\title{Connes implies Tsirelson:\\A simple proof}
\author{Alexander Frei}
\date{\today}
\begin{document}

\maketitle
More precisely, we give a quick and very simple proof of \enquote{the Connes\linebreak embedding problem implies the synchronous Tsirelson conjecture} that relies on only \emph{two elementary ingredients:}
1) the well-known description of synchronous correlations as traces on the algebra per player $\CSTAR(\player)=\CSTAR(\inputs|\outputs)$\linebreak
and 2) an elementary lifting result by Kim, Paulsen and Schafhauser.

Moreover, this bypasses every of the deep results by Kirchberg as well as any other implicit reformulation as the microstates conjecture and thelike.

Meanwhile, we also give a different construction of Connes' algebra $\Romega$ appearing in the Connes embedding problem, which is more suitable for the purposes of quantum information theory and much easier to comprehend for the reader without any prior knowledge in operator algebras.

Most importantly, however, we present this proof for the following reason:
Since the recent refutation of the synchronous Tsirelson conjecture by MIP*=RE, there exists a nonlocal game which violates the synchronous Tsirelson conjecture,
and by the proof of MIP*=RE even a synchronous such game.
The approach however is based on contradiction with the undecidability of the Halting problem, and so remains implicit.
As such the quest now has started to give an \emph{explicit example of a synchronous game} violating the synchronous Tsirelson conjecture together with a direct argument for the failure, and the current article serves as a direct translation to the corresponding operator algebra and its tracial state violating the Connes embedding problem.

Meanwhile the author would like to stress that he does not take any credit for any of the results as already available.
Our only contribution lies in combining the well-known results from quantum information theory with the lifting result as established by Kim Paulsen and Schafhauser in \cite{KIM-PAULSEN-SCHAFHAUSER}.

\section[Player algebra]{Algebra per player: $\CSTAR(\inputs|\outputs)$}

Let us swiftly introduce the algebra per player, which defines one of the two relevant algebras appearing in our main result below.
It is well-known (by Gelfand and Pontryagin duality) that the universal \CSTAR-algebra generated by a projection valued measure agrees with the group \CSTAR-algebra%
\[
  \CSTAR(\INTEGERS/A)=\CSTAR(e_1,\ldots,e_A\in\proj|e_1+\ldots +e_A=1)
\]
and as such we obtain as the universal \CSTAR-algebra per player with a given number of questions and answers the amalgamated free product
\[
  \CSTAR(X=\inputs|A=\outputs):=\overbrace{\CSTAR(\INTEGERS/A)*_1\ldots*_1\CSTAR(\INTEGERS/A)}^\text{$X$-many} = \CSTAR(\INTEGERS/A*\ldots*\INTEGERS/A).
\]
From here one may (we won't be needing it though) define the algebra for two-player games as the maximal tensor product of each player's algebra (with corresponding question and answer sets)
\[
  \CSTAR(\twoplayer)=\CSTAR(\Alice)\tensor\CSTAR(\Bob)=\CSTAR(X|A)\tensor\CSTAR(Y|B)
\]
and any quantum commuting strategy arises simply as a state
\[
  \phi:\CSTAR(\twoplayer)\to\COMPLEX:\TAB p(ab|xy)=\phi\Big(e(a|x)\tensor e(b|y)\Big).
\]
For us the above algebra per player however will be sufficient since we will be dealing with synchronous strategies exclusively, and whence tracial states on the algebra per player.
Having intoduced the first algebra relevant for our main result, let us proceed to the second one.

\section[Connes algebra]{Connes' algebra: $\Romega$}

We introduce in this section the tracial ultrapower of the hyperfinite \TWOONEFACTOR, in short also refered to as Connes' algebra, which defines the main player in the Connes embedding problem.
For this we will pursue another construction for the Connes algebra, which defines a more suitable approach when working in quantum information theory
(and which may be also much easier to comprehend for the reader without any prior knowledge in operator algebras).

But before we do so, let us first describe how the usual construction of the hyperfinite \TWOONEFACTOR\ goes along, just for having both constructions available.
Pick your favorite UHF-algebra of infinite-type like
\begin{gather*}
  \UHF=M_2\tensor M_2\tensor\ldots=\bigotimes^\infty M_2,\TAB\text{or}\TAB\UHF=\left(\bigotimes^\infty M_3\right)\tensor\left(\bigotimes^\infty M_7\right),\\[2\jot]
  \TAB\text{or}\TAB \UHF=\left(\bigotimes^\infty M_2\right)\tensor\left(\bigotimes^\infty M_3\right)\tensor\ldots=\bigotimes_\text{$p$: all primes}\left(\bigotimes^\infty M_p\right).
\end{gather*}
and consider its unique tracial state, for example
\[
  \trace:\UHF=\bigotimes^\infty M_2\to\COMPLEX:\TAB \trace(a_1\tensor a_2\tensor 1\tensor\ldots)=\trace(a_1)\trace(a_2).
\]
Just as an intermediate result --- which may however also be omitted!%
\footnote{Due to \cite[lemma A.9]{BROWN-OZAWA}, which following the proof also works for \CSTAR-algebras,\\ and a Kaplansky density argument.}
--- one may embed the chosen UHF-algebra in the GNS-representation for the tracial state. Taking the von-Neumann algebraic completion of the UHF-algebra within the GNS-representation produces then the hyperfinite \TWOONEFACTOR:
\[
  \hyperfiniteTWOONE = \overline{\UHF}^\mathrm{w\star}\subset B(\overline{\UHF}^2)
\]
The story however wouldn't be over here: From here we would still need to proceed to the ultrapower of the hyperfinite \TWOONEFACTOR\ as follows:
Take the infinite repeated power of either the UHF-algebra from above (or alternatively the hyperfinite \TWOONEFACTOR\ that arose from the completion),
which we denote for shorthand by
\[
  \ell^\infty(\UHF):=\left\{a\in\prod_n^\infty\UHF\Bigg|\sup_n\|a_n\|<\infty\right\} = \prod_n^\infty\UHF.
\]
The resulting operator algebra comes together with the induced trace
\[
  \trace_\omega:\ell^\infty(\UHF)\to\COMPLEX:\TAB \trace_\omega(a) = \lim_{n\to\omega}\trace(a_n)
\]
where the limit is taken along any free ultrafilter.
In order to render the trace faithful one passes, together with the trace, to the following quotient which defines the so-called tracial ultrapower
\[
  \UHF^\omega:=\frac{\ell^\infty(\UHF)}{c_\omega(\UHF;\trace):=\{a\in\ell^\infty(\UHF)|\trace_\omega(a^*a)=0\}}.
\]
This would be the final object appearing in the Connes embedding problem:
\[
  \Romega = \frac{\ell^\infty(\hyperfiniteTWOONE)}{c_\omega(\hyperfiniteTWOONE;\trace)} = \frac{\ell^\infty(\UHF)}{c_\omega(\UHF;\trace)}=\UHF^\omega
\]
Now, while this construction has its merits in the classification of factors due to the hyperfinite \TWOONEFACTOR\ appearing in the intermediate construction, it would be way too overloaded for applications in quantum information theory, and in particular for the relation between the Connes embedding problem and the Tsirelson conjecture. So we refrain from this construction!

Instead we introduce now a different construction which, as promised above, defines a more convenient approach when working in quantum information theory.
For this we shortcut the construction above and basically start at its very end. More precisely, we take the infinite product on matrices of arbitrary sizes, which we denote suggestively as above by
\[
  \ell^\infty(M):=\left\{a\in\prod_n^\infty M_n\Bigg|\sup_n\|a_n\|<\infty\right\} = \prod_n^\infty M_n.
\]
As above, this comes together with the induced trace
(along any free ultrafilter)
\[
  \trace_\omega:\ell^\infty(M)\to\COMPLEX:\TAB \trace_\omega(a) = \lim_{n\to\omega}\trace_n(a_n)
\]
which we render faithful as above by passing to the quotient by
\[
  c_\omega(M;\trace):=\left\{a\in\prod_nM_n|\trace_\omega(a^*a)=\lim_n\trace_n(a_n^*a_n)=0\right\}.
\]
Summarizing the construction: considering sequences of matrices of arbitrary sizes allows us to pickup all finite-dimensional representations, and passing to the quotient allows us to also do so approximately.

On the other hand, the surprising feature (surprising just to some extend) is that this defines also the same tracial ultrapower as before,
\[
  M^\omega = \frac{\ell^\infty(M)}{c_\omega(M;\trace)} = \frac{\ell^\infty(\hyperfiniteTWOONE)}{c_\omega(\hyperfiniteTWOONE;\trace)} = \Romega.
\]
This follows basically by some Kaplansky density type argument and a careful diagonal reindexing --- just with many more steps (see also the footnote above).
Since this would however escape the scope of the current article, we refrain from presenting a proof in here and instead leave it to the experienced reader.

\enlargethispage{\baselineskip}
Now the left-hand side construction we have just provided is much closer to correlations, and so we will from now on use our construction in what follows.
So far on the operator algebra appearing in the Connes embedding problem.

%%%%%%%%%%%%%%%%%%%
%%  Ingredients  %%
%%%%%%%%%%%%%%%%%%%
\section[Ingredients]{Two ingredients}

In this section we prepare two simple ingredients, proposition \ref{SYNC=TRACES} and proposition \ref{LIFTING-PROBLEM} below, which are all what is needed for \enquote{the Connes embedding problem implies the synchronuous Tsirelson conjecture}.
Let us start with the first.

\begin{PROP}[{\cite[corollary 5.6]{PSSTW-2016}}]\label{SYNC=TRACES}
  The set of synchronous quantum-commuting correlations is realized by the trace space restricted on two-moments:
  \[
    \tau\in T\CSTAR(\inputs|\outputs):\TAB p(ab|xy)=\tau(e_{ax}e_{by}).
  \]
  Similarly the set of finite dimensional synchronous correlations is realized by those of such traces which live on finite dimensional quotients thereof
  \begin{TIKZCD}
    \CSTAR(\inputs|\outputs)\rar & \mathrm{some\ fin\ dim\ quotient}\rar{\mathrm{some\ trace}} & \COMPLEX
  \end{TIKZCD}
  and then restricted on two-moments as above.
\end{PROP}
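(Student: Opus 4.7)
The plan is to prove both directions by exploiting the standard tracial vector characterization of synchronicity.

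For the forward direction, I would start with a quantum-commuting synchronous strategy $p(ab|xy)=\<\psi,e_{ax}f_{by}\psi\>$ on some Hilbert space $H$, where $\{e_{ax}\}_a$ and $\{f_{by}\}_b$ are PVMs with $[e_{ax},f_{by}]=0$. The crucial first step is to derive the pointwise identity $e_{ax}\psi=f_{ax}\psi$ for every $a,x$: expanding $\sum_a\|e_{ax}\psi-f_{ax}\psi\|^2$ and using that both $\{e_{ax}\}_a$ and $\{f_{ax}\}_a$ are PVMs collapses the sum to $2-2\sum_a p(aa|xx)$, which vanishes by synchronicity. Substituting $f_{by}\psi=e_{by}\psi$ in the correlation formula and using the commutativity then yields
\[
  p(ab|xy)=\<\psi,e_{ax}e_{by}\psi\>=\<\psi,e_{by}e_{ax}\psi\>,
\]
so that $\tau(x):=\<\psi,x\psi\>$ defines a tracial vector state on the subalgebra of $B(H)$ generated by the $e_{ax}$ (the two-moment equality upgrades to the full tracial property because the $f$'s implement an opposite action on $\psi$). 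Composing with the universal $*$-homomorphism $\CSTAR(\inputs|\outputs)\to B(H)$ then gives the desired trace on $\CSTAR(\inputs|\outputs)$ with the correct two-moments.

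For the backward direction, given a trace $\tau$ on $\CSTAR(\inputs|\outputs)$ I would apply the GNS construction to obtain $(\pi_\tau,H_\tau,\xi_\tau)$ with $\xi_\tau$ cyclic and tracial. Take Alice's operators to be $\pi_\tau(e_{ax})$ and Bob's operators to be the right multiplications by $\pi_\tau(e_{by})$ on $H_\tau$; these are bona fide projections that commute with the left action precisely because $\xi_\tau$ is tracial. A direct calculation on the cyclic vector then gives $p(ab|xy)=\tau(e_{ax}e_{by})$, so the resulting correlation is indeed synchronous quantum-commuting.

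For the finite-dimensional refinement: given a finite-dimensional synchronous strategy, the subalgebra generated by the $e_{ax}$ is itself finite-dimensional, so the trace constructed above factors through a finite-dimensional quotient of $\CSTAR(\inputs|\outputs)$. Conversely, if $\tau$ factors through a finite-dimensional quotient $Q$, then $Q$ is a finite direct sum of matrix algebras and its GNS space is finite-dimensional, so the backward construction produces a finite-dimensional strategy.

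The main obstacle is the initial pointwise identity $e_{ax}\psi=f_{ax}\psi$: this is where synchronicity genuinely enters, and one needs to marshal both the PVM completeness relations and the fact that the diagonal probabilities $p(aa|xx)$ sum to one in a synchronous correlation. Once this identity is in hand, the remainder is a routine dance between tracial vectors and the GNS construction.
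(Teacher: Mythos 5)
The paper deliberately omits a proof of this proposition, citing \cite[corollary 5.6]{PSSTW-2016}, and your argument is precisely the standard one from that reference. It is correct as written: the collapse of $\sum_a\|e_{ax}\psi-f_{ax}\psi\|^2$ to $2-2\sum_a p(aa|xx)=0$ via synchronicity, the reversed-word action of the $f$'s on $\psi$ upgrading the two-moment identity to full traciality, the GNS-plus-right-multiplication converse, and the finite-dimensional refinement are all in order.
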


We skip the proof since it is fairly elementary and well-known.
The second ingredient is the following lifting result found by Kim, Paulsen and Schafhauser,
which we recall together with its proof for convenience of the reader.

\begin{PROP}[{\cite[lemma 3.5]{KIM-PAULSEN-SCHAFHAUSER}}]\label{LIFTING-PROBLEM}
  The following lifting problem has a solution:
  Every representation into the hyperfinite $\mathrm{II}_1$-factor lifts to matrices
  \begin{TIKZCD}
    & \ell^\infty(M) \dar \\
    \CSTAR(\INTEGERS/m)=\overbrace{\COMPLEX\oplus\ldots\oplus\COMPLEX}^{\text{$m$-many}} \rar\urar[dashed] &
    \ell^\infty(M)/c(M,\trace)=\Romega
  \end{TIKZCD}
  and unital representations may be lifted unitally.
  As a consequence, the lifting problem also has a solution for any number of inputs and outputs
  \[
    \CSTAR(\underbrace{\INTEGERS/m*\ldots*\INTEGERS/m}_\text{$n$-many})=\CSTAR(n=\inputs|m=\outputs).
  \]
\end{PROP}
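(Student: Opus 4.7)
The plan is to pass from the projection-valued picture to a single unitary, lift that unitary to $\ell^\infty(M)$, and then apply a spectral perturbation in each $M_n$ to enforce the order-$m$ relation on the nose; the consequence for free products will then follow from the universal property.

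By Pontryagin duality $\CSTAR(\INTEGERS/m)=\COMPLEX^m$, so a unital representation into $\Romega$ is encoded equivalently by orthogonal projections $(p_0,\ldots,p_{m-1})$ with $\sum p_i=1$ or, via the discrete Fourier transform, by a single unitary $u\in\Romega$ satisfying $u^m=1$, with $p_i=\frac{1}{m}\sum_k\zeta^{-ik}u^k$ for $\zeta=e^{2\pi i/m}$. A non-unital representation reduces to the unital case for $\CSTAR(\INTEGERS/(m+1))$ by adjoining the complement $p_m:=1-\sum_{i<m}p_i$ and discarding the last lifted projection at the end. So the task reduces to: given a unitary $u\in\Romega$ with $u^m=1$, produce $(\tilde v_n)_n\in\ell^\infty(M)$ with each $\tilde v_n\in M_n$ a unitary, $\tilde v_n^m=1_n$ exactly, and image $u$ in $\Romega$.

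For the construction, choose a contractive lift $a=(a_n)\in\ell^\infty(M)$ of $u$, polar-decompose $a_n=v_n|a_n|$ in each $M_n$, and pad $v_n$ to a unitary on $\COMPLEX^n$. Since $u$ is unitary, $a_n^*a_n\to 1$ in tracial $2$-norm along $\omega$; a functional-calculus argument (together with a rank estimate to handle the padding) then yields $v_n\to u$ in $\Romega$, hence $v_n^m\to u^m=1$ in $2$-norm. Now diagonalise each unitary $v_n$ in $M_n$ and move every spectral point $\lambda$ on the unit circle to its nearest $m$-th root of unity; the resulting unitary $\tilde v_n$ satisfies $\tilde v_n^m=1_n$ by construction.

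The main technical point is to verify that this spectral perturbation stays small in $2$-norm. For $\lambda=e^{i\theta}$ with $\theta$ chosen as the angular distance to the nearest $m$-th root, the inequality $|\sin(m\theta/2)|\ge(2/\pi)\,|m\theta/2|$ on $|\theta|\le\pi/m$ gives the uniform pointwise bound $|\tilde\lambda-\lambda|\le\frac{\pi}{2m}\,|\lambda^m-1|$; summing against the tracial weight on the spectrum of $v_n$ then upgrades this to
\[
  \|\tilde v_n-v_n\|_2 \;\le\; \frac{\pi}{2m}\,\|v_n^m-1_n\|_2 \;\longrightarrow\; 0.
\]
Hence $(\tilde v_n)$ lifts $u$, and the Fourier formula above produces the desired PVM lift. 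For the consequence, restrict the given unital representation of the free product to each copy of $\CSTAR(\INTEGERS/m)$, lift each restriction separately by the above, and recombine the lifts into a single unital $*$-homomorphism out of the amalgamated free product by its universal property.
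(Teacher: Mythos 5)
Your proof is correct, but it takes a genuinely different route from the paper. The paper follows Kim--Paulsen--Schafhauser: lift each projection $q_i$ separately to a positive contraction $a_i\in\ell^\infty(M)$ (adding the remainder to the last one in the unital case), then round each $a_i$ to the spectral projection $1_{[1/2,1]}(a_i)$ using the bound $\|a-p(a)\|_2\leq 2\|a^2-a\|_2$ of lemma \ref{2-NORM-BOUND}, and enforce mutual orthogonality by an iterative cut-off $a_{k+1}'=(1-p_1-\ldots-p_k)a_{k+1}(1-p_1-\ldots-p_k)$ before rounding the next one. You instead Fourier-dualize $\CSTAR(\INTEGERS/m)$ to a single unitary $u$ with $u^m=1$, lift it via polar decomposition of a contractive lift, and round eigenvalues to the nearest $m$-th roots of unity with the estimate $\|\tilde v_n-v_n\|_2\leq\tfrac{\pi}{2m}\|v_n^m-1_n\|_2$; your rounding inequality checks out (writing $\lambda=\tilde\lambda e^{i\theta}$ with $|\theta|\leq\pi/m$, one has $|\tilde\lambda-\lambda|=2|\sin(\theta/2)|\leq|\theta|$ and $|\lambda^m-1|=2|\sin(m\theta/2)|\geq\tfrac{2m}{\pi}|\theta|$), and the exact relation $\tilde v^m=1$ hands you orthogonality and summing-to-one of the recovered projections for free, with no iterative cut-off needed. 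The trade-off: your argument exploits that $\INTEGERS/m$ is singly generated and so is tailored to the cyclic case, whereas the paper's positive-lift-plus-cut-off scheme works directly with an arbitrary tuple of mutually orthogonal projections; both methods reduce to a quantitative spectral-rounding estimate in the tracial $2$-norm, and both pass to the free product by lifting each copy and invoking the universal property, exactly as you do. Two cosmetic remarks: the padding of the polar partial isometry lives on $\ker|a_n|$, so $v_n|a_n|=a_n$ persists and $\|v_n-a_n\|_2\leq\||a_n|-1\|_2\leq\|a_n^*a_n-1\|_2$ needs no rank estimate; and for the non-unital case your reduction via $p_m:=1-\sum_{i<m}p_i$ to $\CSTAR(\INTEGERS/(m+1))$ is fine, matching the paper's remainder trick in spirit.
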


Before we begin with the proof itself, let us note that the lifting problem easily admits a solution by positive maps:
Any such representation is determined by some tuple of mutually orthogonal projections in the quotient
\[
  \COMPLEX\oplus\ldots\oplus\COMPLEX\to\ell^\infty(M)/c(M,\trace):\TAB e_1\mapsto q_1,\TAB \ldots\TAB e_m\mapsto q_m
\]
and so in particular by some tuple of positive elements. Any single positive element in a quotient may however be easily lifted itself
\[
  \pi:A\to B\to0:\TAB\TAB \forall b\in\pos(B)\ \exists a\in \pos(A):\TAB \pi(a)=b
\]
and so also an entire tuple of positive elements --- each element one-by-one.\\
Put together this defines a solution by some positive map
\[
  \COMPLEX\oplus\ldots\oplus\COMPLEX\to \ell^\infty(M):\TAB \TAB e_1\mapsto a_1,\TAB \ldots\TAB e_m\mapsto a_m.
\]
At the same time one may always arrange for such a lift without increasing the norm of each lift and so arrange for contractions
\[
  \|a_1\|=\|q_1\|\leq1,\TAB \ldots\TAB ,\|a_m\|=\|q_m\|\leq1.
\]
On the other hand, in case of some unital representation one may moreover arrange for a unital lift by adding the remainder on the last,
\[
  \REMAINDER = 1-(a_1+\ldots+a_m):\TAB a_m' := a_m+\REMAINDER.
\]
The gist is now to also arrange for some projection valued lift: this is one of the main accomplishments by Kim, Paulsen and Schafhauser in their article on synchronous games.
At the heart of this problem lies the following technical result, which we formulate in its slightly improved, optimal version.\\
Either such bound (be it optimal or just some upper bound) then allows us to deform the tuple of positive elements into an actual set of projections.
\begin{LMM}[{compare \cite[lemma 3.4]{KIM-PAULSEN-SCHAFHAUSER}}]\label{2-NORM-BOUND}
  Consider for any positive matrix contraction $a\in M_n$ the spectral projection onto the interval $[1/2,1]\subset\REALS$:
  \[
    p(a):=1_{[1/2,1]}(a)\in M_n:\TAB p(a)^2=p(a)=p(a)^*.
  \]
  Then it holds the upper bound for the distance
  \[
    \TAB\|a - p(a)\|_\phi\leq 2\|a^2-a\|_\phi
  \]
  uniformly in every positive matrix contraction
  and any state 2-norm
  \[
  \|x\|_\phi^2=\braket{x|x}_\phi=\phi(x^*x)=\phi(|x|^2)
  \]
  and the bound above is optimal for faithful states.
\end{LMM}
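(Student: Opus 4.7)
The plan is to reduce the operator inequality to a pointwise scalar inequality on $[0,1]$ via the continuous functional calculus, and then check that scalar inequality by a short case analysis.

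First I would set $f(t) := t - 1_{[1/2,1]}(t)$ and $g(t) := t^2 - t$ on $[0,1]$, so that $f(a) = a - p(a)$ and $g(a) = a^2 - a$ as operators. Both are self-adjoint, so $\|f(a)\|_\phi^2 = \phi(f(a)^2)$ and $\|g(a)\|_\phi^2 = \phi(g(a)^2)$. The core step is then the pointwise bound
\[
  |f(t)| \leq 2|g(t)|, \TAB t\in[0,1],
\]
which I would verify by splitting into the two cases $t\in[0,1/2)$ and $t\in[1/2,1]$: in the first case $|f(t)|=t$ and $|g(t)|=t(1-t)\geq t/2$, and in the second $|f(t)|=1-t$ and $|g(t)|=t(1-t)\geq(1-t)/2$. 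Squaring this scalar bound yields $f(t)^2 \leq 4g(t)^2$ on the spectrum of $a$.

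Since $a$ is a positive contraction with spectrum contained in $[0,1]$, functional calculus promotes the pointwise inequality to the operator inequality $f(a)^2 \leq 4\,g(a)^2$ inside $M_n$. Applying any state $\phi$ (which is positive and hence preserves operator inequalities) and taking square roots gives
\[
  \|a-p(a)\|_\phi = \sqrt{\phi(f(a)^2)} \leq 2\sqrt{\phi(g(a)^2)} = 2\|a^2-a\|_\phi,
\]
which is exactly the claimed bound. I do not expect any real obstacle here; the only slight subtlety is remembering that $a$ and $p(a)$ commute (they share a joint functional calculus), so all manipulations happen inside the abelian \CSTAR-subalgebra generated by $a$.

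For optimality, I would exhibit a concrete example where equality is attained by every faithful state. Take $n=1$ and $a=1/2$: then $p(a)=1$, so $a-p(a)=-1/2$ and $a^2-a=-1/4$, giving
\[
  \|a-p(a)\|_\phi = 1/2 = 2\cdot(1/4) = 2\|a^2-a\|_\phi
\]
for the unique (faithful) state on $\COMPLEX$. This example saturates the pointwise bound precisely at $t=1/2$, which is also the only point on $[0,1]$ where the ratio $|f(t)|/|g(t)|$ reaches $2$, confirming that the constant cannot be improved.
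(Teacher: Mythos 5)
Your proof is correct and follows essentially the same route as the paper: reduce to the pointwise inequality $|f(t)|\leq 2|g(t)|$ on the spectrum via functional calculus, check it by the same two-case split at $t=1/2$, apply positivity of the state, and witness optimality with (a multiple of) the identity having $1/2$ in its spectrum. No gaps; nothing further to add.
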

\begin{proof}
  We give a slightly different proof than in \cite{KIM-PAULSEN-SCHAFHAUSER}: Instead the optimal bound can be easily read off from the spectrum as follows.
  For this denote for shorthand the left-hand and right-hand side as
  \[
    f(x):=x-h(x),\TAB g(x):=x^2-x=x(1-x).
  \]
  Since states are positive, we have as a sufficient condition
  \[
    f(a)^2\leq g(a)^2 \implies \phi\Big(f(a)^2\Big)\leq\phi\Big(g(a)^2\Big).
  \]
  This however can be now read off from the spectrum as
  \[
    \forall x\in\sigma(a):\TAB |f(x)|\leq |g(x)|
  \]
  which in our case  boils down to the condition
  \[
    \begin{cases}
    |x|\leq 2|x|\cdot|1-x| & \text{for}\ x\in[0,1/2]\cap\sigma(a), \\
    |1-x|\leq 2|x|\cdot|1-x| & \text{for}\ x\in[1/2,1]\cap\sigma(a).
    \end{cases}
  \]
  Finally note that the bound is optimal for faithful states: simply use some matrix whos spectrum contains the eigenvalue 1/2 --- for instance half the identity.
\end{proof}
With the previous bound at hand one may now derive the desired solution to the lifing problem, which we sketch for completeness.
\begin{proof}[Sketch of proposition \ref{LIFTING-PROBLEM} (based on the construction by \cite{KIM-PAULSEN-SCHAFHAUSER}):]
  Say we have already found a lift to some tuple of positive contractions
  \[
    a_1,\ldots,a_m\in \ell^\infty(M) = \prod_nM_n.
  \]
  While each element of the tuple is a matrix sequence itself like
  \[
    a=(a_1,a_2,\ldots)\in \ell^\infty(M)
  \]
  we will keep viewing each matrix sequence as a single element. The reader new to operator-algebraic techniques may however also savely run the following procedure indexwise for each sequence in the tuple.
  Replace the first one in the tuple by the spectral projection from lemma \ref{2-NORM-BOUND}, then cut-off the resulting projection from the next one and apply the lemma again to that,
  \[
    p_1:=p(a_1),\TAB a_2':=(1-p_1)a_2(1-p_1),\TAB p_2:=p(a_2').
  \]
  Continuing this way one needs to cut-off all the previous projections, for example
  \[
    a_3'=(1-p_1-p_2)a_3(1-p_1-p_2).
  \]
  This way we guarantee their orthogonality since for each next step
  \[
    1\leq k+1\leq m:\TAB a_{k+1}'\perp p_1,\ldots,p_k\implies p(a_{k+1}')\perp p_1,\ldots,p_k.
  \]
  The upper bound in lemma \ref{2-NORM-BOUND} now guarantees that the deformation procedure remains a lift for the original tuple in the quotient: Indeed recall that the quotient is given by the ideal
  \[
    c(M,\trace)=\{a\in\ell^\infty(M)\mid\|a\|_2=0\}
  \]
  which reads when written out as a sequence $a=(a_1,\ldots)\in\prod_nM_n$:
  \[
    \|a\|_2^2=\trace_\omega(a^*a)=\lim_{n\to\omega}\trace\Big(a_n^*a_n\Big)=0.
  \]
  The original tuple in the quotient however consists of mutually orthogonal projections and so the deformation procedure remains a lift since:
  Taking the spectral projection does not alter the equivalence class since by lemma \ref{2-NORM-BOUND}
  \[
    \|p(a)-a\|_2\leq 2\|a^2-a\|_2=0
  \]
  (it helps to view them as a sequence of matrices and one as a sequence of projection matrices, which indexwise get closer and closer in trace 2-norm),\linebreak
  nor does the cutting-off procedure since this cannot be seen by any orthogonal pair in a quotient anyways:
  \[
    \pi:A\to B\to0:\TAB\TAB\pi(a)\perp\pi(a')\implies \pi(1-a)\pi(a')\pi(1-a)=\pi(a').
  \]
  This completes the sketch of the construction for proposition \ref{LIFTING-PROBLEM}.
\end{proof}

We have now successfully established our ingredients,\\
and so we may now proceed to the main result of the article:

%%%%%%%%%%%%%%%%%%%%%%%%
%%  Connes-Tsirelson  %%
%%%%%%%%%%%%%%%%%%%%%%%%
\section{Connes \texorpdfstring{$\implies$}{$>$} Tsirelson}
With our two simple ingredients at hand, namely proposition \ref{SYNC=TRACES} and \ref{LIFTING-PROBLEM}, we may now verify that \enquote{Connes implies the synchronous Tsirelson conjecture}.
\begin{THM}\label{CONNES-TSIRELSON}
  The Connes embedding problem implies the synchronous\linebreak Tsirelson conjecture.
  More precisely, suppose the Connes embedding problem holds true, meaning
  all tracial states approximately arise as
  the unique tracial state on the hyperfinite $\mathrm{II}_1$-factor in the sense that there exists a factorization
  \begin{TIKZCD}
    A\rar[dashed] \arrow[rr, bend right, swap, "\mathrm{some\ trace}"] &
    \Romega \rar{\trace_\omega} & \COMPLEX
  \end{TIKZCD}
  % where, as a note aside, such factorizations are automatically unique due to the uniqueness of tracial states on $\mathrm{II}_1$-factors in general.\\
  Then the synchronous Tsirelson conjecture would also hold true:
  \begin{gather*}
    C^s_{qc}(n|m) \subset \overline{C_{q}(n|m)\cap C^{s}(n|m)}\,\Big(\subset C^s_{qa}(n|m)\subset C_{qc}^{s}(n|m)\Big).
  \end{gather*}
  For comparison it still holds by \cite{KIM-PAULSEN-SCHAFHAUSER}
  \[
    \overline{C_{q}(n|m)\cap C^{s}(n|m)}=C^s_{qa}(n|m)
  \]
  which requires yet a difficult result by Kirchberg on amenable traces!\\
  As consequence, the recent refutation of the synchronous version of the Tsirelson conjecture by \cite{MIP=RE} implies the failure of the Connes embedding problem.
\end{THM}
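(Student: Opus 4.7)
The plan is to chain the two ingredients together along the following sequence: starting from a synchronous quantum commuting correlation, I would first convert it into a tracial state on the player algebra via proposition \ref{SYNC=TRACES}, then push this trace through $\Romega$ using the Connes embedding hypothesis, lift back up to the matrix product via proposition \ref{LIFTING-PROBLEM}, and finally read the entries of the lift as finite dimensional synchronous correlations approximating the original one along the ultrafilter.

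Concretely, given $p\in C^s_{qc}(n|m)$, the first step is to invoke proposition \ref{SYNC=TRACES} to produce a tracial state
\[
\tau:\CSTAR(n=\inputs|m=\outputs)\to\COMPLEX,\TAB p(ab|xy)=\tau(e_{ax}e_{by}).
\]
Assuming Connes, this trace factors through the Connes algebra, giving a unital $*$-homomorphism $\pi:\CSTAR(n|m)\to\Romega$ with $\trace_\omega\circ\pi=\tau$. Since the player algebra is the amalgamated free product of finitely many copies of $\CSTAR(\INTEGERS/m)$, proposition \ref{LIFTING-PROBLEM} applies and $\pi$ lifts unitally to
\[
\tilde\pi:\CSTAR(n|m)\to\ell^\infty(M)=\prod_k M_{d_k}
\]
above the quotient map.

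Each coordinate $\pi_k:\CSTAR(n|m)\to M_{d_k}$ of the lift is then a finite dimensional unital $*$-representation, so by the second half of proposition \ref{SYNC=TRACES} the correlation
\[
p_k(ab|xy):=\trace_{d_k}\Big(\pi_k(e_{ax})\pi_k(e_{by})\Big)
\]
lies in $C_q(n|m)\cap C^s(n|m)$. To conclude it then only remains to observe convergence along the ultrafilter: for every fixed tuple $(a,b,x,y)$ the quantity $p_k(ab|xy)$ tends, as $k\to\omega$, to $\trace_\omega(\tilde\pi(e_{ax})\tilde\pi(e_{by}))$, which by construction of the lift equals $\trace_\omega(\pi(e_{ax})\pi(e_{by}))=\tau(e_{ax}e_{by})=p(ab|xy)$. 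Since the entry set is finite, this places $p$ in the closure of $C_q(n|m)\cap C^s(n|m)$ as required.

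I do not expect a genuine obstacle to remain at this stage: the hard technical work has been absorbed into proposition \ref{LIFTING-PROBLEM}, whose projection valued lifting rests on the optimal $2$-norm bound of lemma \ref{2-NORM-BOUND}, together with the shortcut construction of $\Romega$ as $M^\omega$ which is what makes the matrix lift meaningful in the first place. The only subtle point to double-check is that the notion of approximation hidden in the Connes embedding hypothesis matches the convergence required on the correlation side, but both are ultrafilter limits of exactly the same nature and match automatically.
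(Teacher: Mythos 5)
Your proposal is correct and follows essentially the same route as the paper: Proposition \ref{SYNC=TRACES} to pass to a trace, the Connes hypothesis to factor it through $\Romega$, Proposition \ref{LIFTING-PROBLEM} to lift to $\ell^\infty(M)$, and the coordinatewise reading of the lift with the ultrafilter limit of normalized matrix traces to produce the approximating finite-dimensional synchronous correlations. You in fact spell out the final convergence step more explicitly than the paper does, which only gestures at it by recalling the definition of $\trace_\omega$.
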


%%%%%%%%%%%%%%%%%%%%%%%%
%%  Proof of theorem  %%
%%%%%%%%%%%%%%%%%%%%%%%%
\begin{proof}
  The result now basically follows from the previous ingredients:
  % \vspace*{-\topsep-\parskip}
  \vspace*{-\topsep}
  \vspace*{-\partopsep}
  \vspace*{-\parskip}
  \begin{enumerate}
    \item[Step 1:]Consider a synchronous quantum-commuting strategy, which by proposition \ref{SYNC=TRACES} is given by some trace restricted on two-moments,
    \[
      \tau\in T\CSTAR(\inputs|\outputs):\TAB p(ab|xy)=\tau(e_{ax}e_{by}).
    \]
    \item[Step 2:] Assuming the Connes embedding problem, all traces would approximately arise as the unique trace on the hyperfinite \TWOONEFACTOR\ and so also our trace:
    \begin{TIKZCD}
      \CSTAR(\inputs|\outputs) \rar[dashed]\arrow[rr, bend right] &
      \ell^\infty(M)/c_\omega(M,\trace)=\Romega\rar{\trace_\omega} &\COMPLEX.
    \end{TIKZCD}
    \item[Step 3:] By proposition \ref{LIFTING-PROBLEM} any such representation admits a lift
    \begin{TIKZCD}
      & \ell^\infty(M) \dar \\
      \CSTAR(\inputs|\outputs) \rar\urar[dashed] &
      \ell^\infty(M)/c_\omega(M,\trace)=\Romega.
    \end{TIKZCD}
  \end{enumerate}
  Putting all together, any such resulting lift however gives a desired approximation by finite dimensional synchronous correlations:
  \begin{TIKZCD}
    & \ell^\infty(M) \dar\drar[bend left]\drar[phantom, "{\trace_\omega}"description] \\
    \CSTAR(\inputs|\outputs)\urar &
    \ell^\infty(M)/c_\omega(M,\trace)=\Romega\rar &\COMPLEX.
  \end{TIKZCD}
  More precisely, simply recall the trace as it was originally defined:
  \[
    \prod_nM_n=\ell^\infty(M)\ni a=(a_1,a_2,\ldots):\TAB\TAB \trace_\omega(a)=\lim_{n\to\omega}\trace_n(a_n).
  \]
  This completes the proof and so we conclude as desired:
  the Connes embedding problem implies the synchronous Tsirelson conjecture.
\end{proof}

\section*{Acknowledgements}

The author would like to thank his supervisor S{\o}ren Eilers for his kind support and encouragements, as well as Ryszard Nest and Mikkel Munkholm for helpful discussions on the hyperfinite \TWOONEFACTOR.
Moreover, the author acknowledges the support under the Marie–Curie Doctoral~Fellowship~No.~801199.

%%%%%%%%%%%%%%%%%%%%
%%  Bibliography  %%
%%%%%%%%%%%%%%%%%%%%
\bibliography{Bibliography}
\bibliographystyle{alpha-all}

\end{document}